\begin{document}
%
%

\theoremstyle{plain}
\newtheorem{thm}{\indent Theorem}[section] 
\newtheorem{lem}{\indent Lemma} 
\newtheorem{cor}{\indent Corollary} 
\newtheorem{prop}{\indent Proposition} 
\newtheorem{claim}{\indent Claim} 
\theoremstyle{remark}
\newtheorem{define}{\indent Definition} 
\newtheorem{rem}{\indent Remark} 
\newtheorem{example}{\indent Example} 
\newtheorem{notation}{\indent Notation} 
\newtheorem{assertion}{\indent Assertion} 
\newtheorem{conj}{\indent Conjecture} 
\newtheorem{pf}{\indent Proof}
\renewcommand{\thepf}{}
\newtheorem{formula}{\indent Formula}
\newtheorem{ass}{\indent Assumption}

\newtheorem{remarkNo}{\indent Remark.}
\renewcommand{\theremarkNo}{}

\def\C{{\mathbb{C}}}
\def\R{{\mathbb{R}}}
\def\Z{{\mathbb{Z}}}
\def\Q{{\mathbb{Q}}}
\def\F{{\mathbb{F}}}
\def\g{{\mathfrak{g}}}
\def\gl{{\mathfrak{gl}}}
\def\h{{\mathfrak{h}}}
\def\sl{{\mathfrak{sl}}}
\def\sp{{\mathfrak{sp}}}
\def\D{{\mathfrak{D}}}
\def\p{{\mathfrak{p}}}
\def\o{{\mathcal{O}_K}}
\def\Tr{\mathrm{Tr}}
\def\N{\mathrm{N}}
\def\K{\hat{K}}
\def\hatK{\hat{\hat{K}}}

\makeatletter
\def\qedsymbol{\hbox to.77778em{\hfil\vrule
\vbox to.675em{\hrule width.6em\vfil\hrule}\vrule\hfil}}
\def\qed{{\unskip\nobreak\hfil\penalty50
\quad\hbox{}\nobreak\hfil \hbox{$\qedsymbol$}
\parfillskip\z@ \finalhyphendemerits\z@\par}}
\long\def\proof{%
\trivlist\itemindent0pt
\item[\hskip \labelsep{\indent \scshape Proof.}]\hskip.5em}
\def\endproof{\hfill \qed \endtrivlist}
\def\address#1#2{\begingroup
\noindent\parbox[t]{7.5cm}{%
\small{\scshape\ignorespaces#1}\par\vskip1ex
\noindent\small{\itshape E-mail address}%
\/: #2\par\vskip4ex}\hfill%
\endgroup}%
\makeatother
\title{On ramifications of Artin-Schreier extensions of surfaces over algebraically closed fields of positive characteristic III}
\author{Masao Oi} 
\subjclass[2010]{14E22, 13B05}
\thanks{The author is supported by the JSPS Research Fellowships for Young Scientists.}
\begin{abstract}
      For a smooth surface $X$ over an algebraically closed field of positive characteristic,
      we consider the ramification of an Artin-Schreier extension of $X$.
      A ramification at a point of codimension 1 of $X$ is understood by the Swan conductor.
      A ramification at a closed point of $X$ is understood by the invariant $r_x$ defined by Kato [\ref{ch:Ka2}].
      The main theme of this paper is to give a simple formula to compute $r_x'$ defined in [\ref{ch:Oi2}],
      which is equal to $r_x$ for good Artin-Schreier extension.
      We also prove Kato's conjecture for upper bound of $r_x$.
\end{abstract}
%

\maketitle

\section{Introduction}
     Let $F$ be an algebraically closed field of characteristic $p>0$.
     Let $X$ be a smooth proper surface over $F$, $D$ a simple normal crossing divisor. Put $U = X - D$.
     For a closed point $x \in D$, we say $x$ is in Case (I) if the number of irreducible components of $D$ containing $x$ is one,
     we say $x$ is in Case (II) if the number of irreducible components of $D$ containing $x$ is two.
     We also say $x$ is of type (I) or type (II).
     In this paper, we always assume $D$ is generated by $t_1$ in $O_{X,x}$ in case (I), and
     $D$ is generated by $t_1 t_2$ in $O_{X,x}$ in case (II), where $t_1,t_2 \in O_{X,x}$.
     Throughout this paper, we fix $x$ once and for all. 
     
     Let $l$ be a prime number which is different from $p$.
     For a character $\chi : \pi_1(U) \to \bar \Q_l^\times$ of order $p$, Kato has defined an invariant $r_x = r_x(\chi)$ in his paper [\ref{ch:Ka2}].
     The invariant $r_x$ is related to the Euler Poincar\'e characteristic of $\mathfrak{F}_\chi$,
     where $\mathfrak{F}_\chi$ is the \'etale sheaf corresponding to $\chi$.
     Let $K$ be the function field of $X$, and $K'$ be the Artin-Schreier extension of $K$ corresponding to $\chi$.

     By the Artin-Schreier theory, there is an element $f \in K$ such that $K'=K(\alpha)$ and $\alpha^p-\alpha=f$.
     $f$ is determined modulo $\beta(K)$ (up to constant multiple) by $K'/K$, where $\beta$ is the Artin-Schreier map $x\mapsto x^p-x$.

     In part I of this paper ([\ref{ch:Oi1}]), we studied certain Artin-Schreier extensions of 2-dimensional affine plane over $F$ and
     found an algorithm to compute $r_x'$ (see Definition \ref{ch:r_x'}) which is equal to $r_x$ for ``almost all" extensions.
     In part II of this paper, we generalize this result to any Artin-Schreier extension of surfaces over $F$ and
     prove Kato's conjecture for good Artin-Schreier extension.
     In this paper, we give a simple formula to compute $r_x'$ (Theorem \ref{ch:simple})
     and prove Kato's conjecture for upper bound of $r_x$ (Theorem \ref{ch:Katoconj1}).

\section{Definitions of $Sw_{D'}(f)$ and $r_x$}
     We recall the definition of the Swan conductor $Sw_{D'}(f)$ following [\ref{ch:Ka1}].
\begin{equation*}
     Sw_{D'}(f) := \min \{  \max \{ -v_{D'}(g) , 0 \} \mid g \in K, g \equiv f \mod{ \beta (K) } \}.
\end{equation*}
     Here $D'$ is an irreducible divisor of $X$ and $v_{D'}$ is the normalized additive valuation on $K$ defined by $D'$. 
     Let $X'=X_s \to X_{s-1} \to \cdots \to X_0 = X$ be a sequence of blowing-ups of closed points lying over $x$
     such that $(X',U',\chi)$ is clean (see Definition \ref{ch:clean}) at all points of $X' \backslash U'$ with $U'$ the inverse image of $U$ in $X'$.
     For each $0 \leq i < s$, let $\mu_i$ be the following nonnegative integer.
     Let $U_i$ be the inverse image of $U$ in $X_i$. Let
     $\mu_i = e_i(e_i-1)$ in Case (I) (resp. $\mu_i = e_i^2$ in Case (II)) with $e_i \geq 0$
     the integer defined with respect to the blowing up $pr_i: X_{i+1} \to X_i$ at $x_i$.
     Here
\begin{align*}
     e_i  :=
\begin{cases} 
Sw_{D_{1,i}}(f) - Sw_{pr_i^{-1}(x_i)}(f) &\mbox{ \quad if $x_i$ is of type (I),} \\
Sw_{D_{1,i}}(f) + Sw_{D_{2,i}}(f) - Sw_{pr_i^{-1}(x_i)}(f) &\mbox{ \quad if $x_i$ is of type (II) ,}
\end{cases}
\end{align*} 
     where $D_{1,i}$ (resp. $D_{1,i},D_{2,i}$) is the irreducible divisors of $X_i$ containing $x_i$. 
     The invariant $r_x$ is defined by
\begin{equation*}
     r_x = \displaystyle \sum_{i=0}^{s} \mu_i.
\end{equation*}
     In Case (I), we choose $t_2$ such that $(t_1,t_2)$ is the maximal ideal of $O_{X,x}$.
     We fix $t_2$ once and for all if $x$ is of type (I).

\section{Definitions of $pg(f)$, $r'_x$, good representatives, clean models}
     Let the notations be as in the introduction.
     We recall the definitions of $pg(f)$ and $\bar{pg}(f)$ following [\ref{ch:Oi2}].
     
     Let $f$ be an element of $O_{X,x}[(t_1t_2)^{-1}]$, then
     there exist integers $k,a_0,b_0,a_1,b_1,\ldots,a_k,b_k$ such that
     $a_0<a_1<\cdots<a_k,b_0<b_1<\cdots<b_k$, and
\begin{equation}
     f \in t_1^{-a_0} t_2^{b_0} O_{X,x}^\times + t_1^{-a_1} t_2^{b_1} O_{X,x}^\times + \cdots  + t_1^{-a_k} t_2^{b_k} O_{X,x}^\times.
\end{equation}
     We define $pg(f)$ and $\bar{pg}(f)$ by
\begin{equation}
     pg(f) = ((a_0,b_0),(a_1,b_1),\cdots, (a_k,b_k)) \in (\Z^2)^{\oplus k+1},
\end{equation}
\begin{equation}
    \bar{pg}(f) = \{ (a_0,b_0),(a_1,b_1),\cdots, (a_k,b_k) \}  \subset \Z^2.
\end{equation}
     We recall the definition of a good representative of $\chi$ following [\ref{ch:Oi2}].
     We say $f$ is a good representative of $\chi$ if

\begin{equation}
     \bar{pg}(f) \cap p\Z^2 \subset \{ (0,0) \}, \bar{pg}(f) \cap (\Z_{\geq 0} \times \Z_{\leq 0}) \neq \emptyset.
\end{equation}

     We define inductively the set $ess(A)$ of essential vertices of $A=((a_0,b_0),(a_1,b_1),\cdots, (a_k,b_k))$.
     We put $i_0=0$. We define $i_l$ by induction on $l$ as follows.
\begin{equation}
     i_{l+1} := \max \{ j \mid \frac{b_j-b_{i_l}}{a_j-a_{i_l}} = \min_{i_l < j' \leq k} \{  \frac{b_{j'}-b_{i_l}}{a_{j'}-a_{i_l}} \} \} .
\end{equation}
     We define $s$ so that $i_s=k$.
     We define $ess(A)$ by $ess(A) = ((a_{i_0},b_{i_0}),(a_{i_1},b_{i_1}),\cdots, (a_{i_s},b_{i_s}))$.
\begin{define}\label{ch:clean}
    In Case (I), we say $(X,U,\chi)$ is clean at $x \in D$ if $pg(f) = ((a_0,b_0))$ or $((a_0,b_0),(a_1,b_0+1))$ for some good representative $f$.
    In Case (II), we say $(X,U,\chi)$ is clean at $x \in D$ if $pg(f) = ((a_0,b_0))$ for some good representative $f$.
\end{define}
   
      We will define $r_t'$ ($t=1$ or $t=2$ according as $x$ is of type (I) or type (II)) by
\begin{align*}
      r_t'( (a_i',b_i')_{0 \leq i \leq k} ) :&= \mu + r_2'( (a_j',b_j'-a_j')_{j \in J_a'}) + r_t'( (a_j'-b_j',b_j')_{j \in J_b'}), \\
      r_t'( (a_0,b_0) ):&= 0,
\end{align*}
      where
\begin{align*}
   J_a' := \{ j \mid b_j'-a_j' < \max_{j+1 \leq i \leq k} \{ b_i'-a_i' \} \}, \\
   J_b' := \{ j \mid a_j'-b_j' > \max_{0 \leq i \leq j-1} \{ a_i'-b_j' \} \}.
\end{align*}
      Recall that $\mu = e(e-1)$ in Case (I) (resp. $\mu = e^2$ in Case (II)) with
      $e:= \max \{ n+a_k, 0 \} + \max \{ -m, 0 \} - \max \{ \{ n-m + a_i-b_i \}_{0 \leq i \leq k} , 0 \}\geq 0$.
      Note that $e:= a_k - \min_{0 \leq i \leq k} \{ a_i-b_i \}$ if $\bar{pg}(f) \cap (\Z_{\leq 0} \times \Z_{\geq 0}) \neq \emptyset$.
      We also use the notation $r_x'((a_i',b_i')_{0 \leq i \leq k})$ instead of $r_t'((a_i',b_i')_{0 \leq i \leq k})$.

\begin{define}\label{ch:r_x'}
      We define $r_x'$ by $r_x'=r_t'(pg(f))$, where $f$ is a good representative of $\chi$.
      Note that $r_x'$ is well defined because $r_t'(pg(f))$ does not depend on the choice of a good
      representative (cf. [\ref{ch:Oi2}]).
\end{define}
      
\section{A simple formula for $r_x'$}
      In this section, we give a simple formula to compute $r_x'$  (see Definition \ref{ch:r_x'}).
\begin{lem}\label{ch:ess}
      $r_x'$ depends only on the essential vertices of $pg(f)$.
\end{lem}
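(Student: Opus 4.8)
The plan is to trace the recursion defining $r_t'$ and check that, at every stage, replacing $pg(f)$ by the sub-tuple of essential vertices $ess(pg(f))$ changes neither the quantity $\mu$ (equivalently $e$) nor the two derived tuples $(a_j',b_j'-a_j')_{j\in J_a'}$ and $(a_j'-b_j',b_j')_{j\in J_b'}$ appearing in the recursive calls, up to their own essential vertices. Since $r_t'$ bottoms out at a single vertex, where the answer is $0$, an induction on the number of vertices then gives the claim. Concretely, write $A=((a_0,b_0),\dots,(a_k,b_k))$ and $ess(A)=((a_{i_0},b_{i_0}),\dots,(a_{i_s},b_{i_s}))$; I want $r_t'(A)=r_t'(ess(A))$.

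First I would handle the invariant $e$. By definition $e=\max\{n+a_k,0\}+\max\{-m,0\}-\max\{\{n-m+a_i-b_i\}_{0\le i\le k},0\}$, so the only $A$-dependence beyond the endpoint $a_k$ (which is an essential vertex, as $i_s=k$) is through $\max_i\{a_i-b_i\}$. The key geometric observation is that $\max_i\{a_i-b_i\}=\max_i\{-(b_i-a_i)\}$ is attained at a vertex of the lower convex hull of the point set $\bar{pg}(A)$ in the direction of the linear functional $(a,b)\mapsto a-b$; and the essential vertices $i_0<i_1<\dots<i_s$ are, by the successive-minimal-slope construction in the displayed formula for $i_{l+1}$, exactly the vertices of that lower hull (the slopes $\frac{b_{i_{l+1}}-b_{i_l}}{a_{i_{l+1}}-a_{i_l}}$ are strictly increasing, so the $i_l$ trace out the hull). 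Hence $\max_{0\le i\le k}\{a_i-b_i\}=\max_{0\le l\le s}\{a_{i_l}-b_{i_l}\}$, and likewise for the variant expression $e=a_k-\min_i\{a_i-b_i\}$ in the special case, so $e$ and therefore $\mu$ are unchanged.

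Next I would show the recursive calls are preserved. For $J_b'=\{j\mid a_j'-b_j'>\max_{0\le i\le j-1}\{a_i'-b_i'\}\}$, unwinding the definition shows $j\in J_b'$ precisely when $(a_j,b_j)$ is a vertex of the lower hull for the functional $a-b$ among the first $j+1$ points — i.e. $J_b'\subset\{i_0,\dots,i_s\}$ and the tuple $(a_j'-b_j',b_j')_{j\in J_b'}$ already consists only of essential vertices of $A$; by the inductive hypothesis applied to the strictly shorter tuple it suffices that $ess$ commutes with the substitution $(a,b)\mapsto(a-b,b)$, which is immediate since this is an invertible affine-linear map and the $ess$-construction depends only on the oriented combinatorics of slopes — a shear does not change which vertices lie on a hull or the order of the relevant minima. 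A symmetric argument handles $J_a'$ with the map $(a,b)\mapsto(a,b-a)$ and the observation that $J_a'$ picks out hull vertices from the tail. The one point needing care is that in the recursion the roles of the two coordinates swap ($r_t'$ calls $r_2'$ on the $J_a'$-part), so one must check the hull/slope correspondence symmetrically in both coordinates; this is where I expect the bookkeeping to be heaviest, but it is purely a matter of matching the inequalities in the definitions of $J_a',J_b',i_{l+1}$ against the statement "vertex of a lower convex hull."

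The main obstacle, then, is not any single hard estimate but getting the dictionary exactly right: showing that the index sets $J_a'$, $J_b'$ and the essential-vertex sequence $(i_l)$ are all describing vertices of one-sided convex hulls of the same finite planar point set, stable under the coordinate shears used in the recursion, so that discarding non-essential vertices never affects any term of $r_x'$. Once that dictionary is in place the induction on $\#pg(f)$ closes immediately, using $r_t'((a_0,b_0))=0$ as the base case.
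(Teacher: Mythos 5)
Your skeleton is the same as the paper's: unwind the recursion for $r_t'$, show $\mu$ is determined by $ess(pg(f))$, show the two recursive tuples are unchanged up to their own essential vertices, and induct on the length and the depth with a single vertex as base case. Your hull argument for the $\mu$-step is fine for the quantity $\max_i\{a_i-b_i\}$ appearing in the displayed definition of $e$ (a maximizer of the functional $(a,b)\mapsto a-b$ is a vertex of the lower chain, i.e. an essential vertex), and is if anything cleaner than the paper's mediant computation; but be careful with your ``likewise'' for the variant $e=a_k-\min_i\{a_i-b_i\}$: a minimizer of $a-b$ sits on the upper chain in general, so that case is not the same observation and needs its own justification.

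The genuine gap is the assertion that $J_b'\subset\{i_0,\dots,i_s\}$ (and its mirror for $J_a'$). The condition $a_j-b_j>\max_{0\le i\le j-1}\{a_i-b_i\}$ only looks at points to the left of $j$, so a point lying strictly above the lower hull of the whole configuration can still be selected. Concretely, for $A=((0,0),(2,1),(8,2))$ one has $ess(A)=((0,0),(8,2))$, while $a_j-b_j=0,1,6$ is strictly increasing, so the non-essential index $1$ lies in $J_b'$; hence the recursive tuple $(a_j-b_j,b_j)_{j\in J_b'}$ does \emph{not} consist only of (sheared) essential vertices of $A$, and your induction does not close as written. What must be proved instead is the statement the paper records (and itself leaves as ``easy to see''): the essential vertices of each recursive tuple are exactly the sheared images of an initial, respectively terminal, run of $ess(A)$, so that spuriously selected non-essential points are discarded at the next application of $ess$, and the recursive calls for $A$ and for $ess(A)$ have equal $r'$-values by the inductive hypothesis. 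Your shear observation is the right tool for part of this: $(a,b)\mapsto(a-b,b)$ has determinant $1$ and preserves the left-to-right order of the selected points (since $a_j-b_j$ increases along $J_b'$), hence preserves ``strictly above a chord''. But you must in addition track which essential vertices of $A$ actually survive the selection (they need not all do: for $((0,0),(1,3),(2,4))$ the essential index $2$ is not in $J_b'$), and that bookkeeping — precisely the existence of the cut-off indices $s_1,s_2$ — is the substantive content of this step and is missing from the proposal.
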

\begin{proof}
      Let $pg(f) = ((a_0,b_0),(a_1,b_1),\cdots, (a_k,b_k))$.
      By the definition of $r_x'$, we see
\begin{equation}
       r_x'((a_i,b_i)_{0 \leq i \leq k}) =\mu +
                                              r_x'( ( a_j -b_j , b_j )_{j \in  J_a'} ))+
                                              r_x'( ( a_j , b_j - a_j  )_{j \in  J_b'} ))
\end{equation}
      It suffices to show the right hand side depends only on essential vertex of $pg(f)$.
      We claim that $\mu$ depends only on $ess(pg(f))$. 
      In fact $\min_{0 \leq i \leq k} \{ a_i-b_i \} = \min_{0 \leq t \leq s} \{ a_{i_t}-b_{i_t} \}$.
      Indeed if $\min_{0 \leq i \leq k} \{ a_i-b_i \} < \min_{0 \leq t \leq s} \{ a_{i_t}-b_{i_t} \}$,
      there exist $u$ and $t_1$ such that $a_u-b_u$ is minimal (i.e. $a_u-b_u = \min_{0 \leq i \leq k} \{ a_i-b_i \}$)
      and $i_{t_1} < u < i_{t_1+1}$.
      By the minimality of $a_u-b_u$,
\begin{equation}
      \frac{b_u-b_{i_{t_1}}}{a_u-a_{i_{t_1}}} < 1 < \frac{b_{i_{t_1+1}}-b_u}{a_{i_{t_1+1}}-a_u}. 
\end{equation}
      Therefore
\begin{equation}
      \frac{b_u-b_{i_{t_1}}}{a_u-a_{i_{t_1}}} < \frac{b_{i_{t_1+1}}-b_{i_{t_1}}}{a_{i_{t_1+1}}-a_{i_{t_1}}}  < \frac{b_{i_{t_1+1}}-b_u}{a_{i_{t_1+1}}-a_u}.
\end{equation}
       (we use the inequality $a/b>(a+c)/(b+d)>c/d$ for $a/b>c/d>0$, $a,b,c,d>0$).
       This contradicts the definition of essential vertices.
       Therefore $\min_{0 \leq i \leq k} \{ a_i-b_i \} = \min_{0 \leq t \leq s} \{ a_{i_t}-b_{i_t} \}$.
       It is easy to see that there exist $s_1$, $s_2$ such that
       $( a_{i_t}-b_{i_t} , b_{i_t} )_{0 \leq t \leq  s_1} = ess( ( a_j -b_j , b_j )_{j \in J_a'} )$ and
       $( a_{i_t} , b_{i_t}-a_{i_t} )_{s_2 \leq t \leq  s} = ess( ( a_j -b_j , b_j )_{j \in J_b'} )$.
       Therefore, we prove the lemma by induction on $k$ and the depth
       (recall that the depth is defined by $a_k-a_0+b_k-b_0$).
\end{proof}

\begin{define}
      For $A=((a_0,b_0),\ldots,(a_k,b_k))$,
      we define the area of $A$ by
\begin{equation}
      Area(A) := \displaystyle \sum_{t_1=0}^{k-1} (a_{t_1+1} - a_{t_1} ) \cdot (b_{t_1+1} + b_{t_1} - 2 b_0).
\end{equation}
\begin{thm}\label{ch:simple}
      We have the equality $r_x' = Area(ess(pg(f))) + (t-2) (a_k-a_0)$ for a good representative $f$,
      where $t=1$ or $t=2$ according as $x$ is of type (I) or of type(II).
\end{thm}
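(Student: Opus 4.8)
The plan is to establish the formula by the same double induction that underlies the definition of $r_t'$: induction on the number $k+1$ of vertices together with induction on the depth $a_k-a_0+b_k-b_0$. Write $R_t(A):=Area(ess(A))+(t-2)(a_k-a_0)$ for the asserted right-hand side, with $t=1$ in type (I) and $t=2$ in type (II). By Lemma \ref{ch:ess} it suffices to treat configurations all of whose vertices are essential, that is, with strictly increasing slopes $(b_{i+1}-b_i)/(a_{i+1}-a_i)$; indeed the proof of Lemma \ref{ch:ess} already supplies the structural input we need, namely that the essential vertices of each of the two subconfigurations occurring in the recursion are shears of essential vertices of $pg(f)$. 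The base case $pg(f)=((a_0,b_0))$ is immediate: both sides equal $0$.

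For the inductive step one starts from the defining recursion, which writes $r_x'(pg(f))$ as $\mu$ plus the $r'$-values of the two subconfigurations obtained by restricting the index set (to $J_a'$, resp.\ to $J_b'$) and applying the unipotent shears $(a,b)\mapsto(a,b-a)$, resp.\ $(a,b)\mapsto(a-b,b)$, the first subconfiguration being treated as type (II) and the second inheriting the ambient type $t$. Both shears have determinant $1$, hence preserve the quantity $\sum(a_{i+1}-a_i)(b_{i+1}+b_i)$, so the only genuine bookkeeping is (i) the change of base level $b_0$ when one passes to a subconfiguration, which alters $Area$ by an explicit term equal to the width of the subchain times twice the displacement of the base level, and (ii) collecting the $(t-2)(\cdot)$ corrections contributed by each of the two pieces. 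Applying the induction hypothesis to the two subconfigurations --- each of which has strictly smaller depth, because each of the two shears strictly decreases the depth of a nontrivial chain, or has fewer vertices --- turns the right-hand side of the recursion into $\mu+Area(ess(pg(f)))$ modified by these explicit linear terms.

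It remains to verify a single combinatorial identity relating $Area(ess(pg(f)))$, $\mu$, the areas of the two re-based sheared subconfigurations, and the linear corrections from (i)--(ii). Geometrically $\mu$ is the area ``defect'': when the convex lattice chain $ess(pg(f))$ is cut at the vertex where its slope crosses the diagonal $a=b$ and the two halves are sheared into standard position, the two pieces reassemble to the original region except for one extra region whose doubled area is $e^2$, the length $e$ being read off by unwinding the definition of $e$ (see Section 3; in the relevant range this reduces to $e=a_k-\min_i\{a_i-b_i\}$). This accounts for the contribution $\mu=e^2$ in type (II). In type (I) the extra $-(a_k-a_0)$ carried by $R_1$ at $pg(f)$ is not cancelled by the corresponding type-(I) correction on the second subconfiguration --- their widths differ by exactly $e$ --- and this simultaneously converts $e^2$ into $e(e-1)$ and removes the leftover linear terms. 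One checks the identity by a direct computation with the coordinates of the essential vertices and of the pivot, handling the cases (I) and (II) uniformly through the term $(t-2)(a_k-a_0)$, and one disposes separately of the degenerate situations in which one of the two subconfigurations is trivial, where the corresponding $r'$-value and $Area$ both vanish.

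The step I expect to be the main obstacle is this last verification: identifying $\mu$ with the defect region --- which forces one to pin down the definition of $e$ and to reconcile its general form with the special form used above under the appropriate hypothesis on $\bar{pg}(f)$ --- and then checking that all of the base-level shifts and the type corrections cancel. The asymmetry of the recursion (the $J_a'$-part forced to type (II), the $J_b'$-part keeping the ambient type $t$) is exactly what produces the coefficient $t-2$ of $a_k-a_0$, and carrying out this bookkeeping faithfully is where the real work lies.
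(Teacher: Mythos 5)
Your proposal follows essentially the same route as the paper's proof: reduce to essential vertices via Lemma \ref{ch:ess}, run the double induction on the number of vertices and the depth, split the convex chain at the pivot vertex $u$ minimizing $a_i-b_i$, and identify $\mu=e^2$ (resp.\ $e(e-1)$) with the area defect produced by the two unimodular shears, with $e=a_k-\min_i\{a_i-b_i\}=a_k-a_u+b_u$. The combinatorial identity you flag as the remaining obstacle is precisely the displayed polygon decomposition in the paper, $Area(ess(pg(f)))=(a_k-a_u+b_u)^2+Area(ess((a_i-b_i,b_i)_{i\in J_a'}))+Area(ess((a_i,b_i-a_i)_{i\in J_b'}))$, so your argument matches the paper's at essentially the same level of detail (the paper likewise omits the type (I) bookkeeping as ``similar'').
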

\begin{proof}
      By the above Lemma \ref{ch:ess}, we may assume $pg(f)=ess(pg(f))=(a_i, b_i)_{0 \leq t \leq k}$.
      Let $u$ be an integer such that $a_u-b_u$ is minimal (i.e. $a_u-b_u = \min_{0 \leq i \leq k} \{ a_i-b_i \}$).
      We treat the Case (II). The proof for the case (I) is similar and omit this.
      There exist integers $s_1$ and $s_2$ such that
\begin{equation}
      ess((a_i - b_i , b_i)_{i \in J_a'}) =   (a_i - b_i ,b_i)_{0 \leq i \leq s_1},
\end{equation}
\begin{equation}
      ess((a_i , b_i - a_i)_{i \in J_b'}) =   (a_i  ,b_i - a_i)_{s_2 \leq i \leq k},
\end{equation}
\begin{align*}
      Area_{ ess((a_i , b_i)_{0 \leq i \leq k})} 
      &= 2 \times S((a_0,b_0)- \cdots- (a_u,b_u)- (a_k,b_0)) \\
      & \quad + 2 \times S((a_u,b_u) - \cdots - (a_k,b_k) - (a_k,b_0)) \\
      &= (a_k-a_u+b_u)^2 + \\
      & \quad  2 \times S((a_0,b_0)- \cdots- (a_u,b_u)- (a_u-b_u ,b_0)) \\
      & \quad + 2 \times S((a_u,b_u) - \cdots - (a_k,b_k) - (a_k,b_0+a_k-a_u+b_u)) \\
      &= (a_k-a_u+b_u)^2 + Area_{ ess((a_i-b_i , b_i)_{i \in J_a'})} + Area_{ ess((a_i , b_i-a_i)_{i \in J_b'})} 
\end{align*}
      Here we denote the area of the $n$-gon with vertices $v_1,\cdots,v_n$ by $S(v_1-v_2-\cdots-v_n)$.
      The equality $r_x' = Area(ess(pg(f)))$ holds for $k=0$.
      We prove $r_x' = Area(ess(pg(f)))$ by induction on $k$ and the depth (recall that the depth is defined by $a_k-a_0+b_k-b_0$). In fact
\begin{align*}
      r_x'((a_i,b_i)_{0 \leq i \leq k})  &=  \mu + r_x'( (a_i-b_i , b_i)_{0 \leq i \leq u} ) + r_x'((a_i , b_i-a_i)_{u \leq i \leq k} ) \\
                                             &=  (a_k-a_u+b_u)^2 + Area_{ess((a_i-b_i , b_i)_{i \in J_a'}))} + Area_{ess((a_i , b_i-a_i)_{i \in J_b'})} \\
                                            &=  Area_{ ess((a_i , b_i)_{0 \leq i \leq k}) }.
\end{align*}
     We use the induction assumption to show the second equality. 
\end{proof}
\end{define}

\section{ A proof of Kato's conjecture for the upper bound of $r_x$.}
     We prove the Kato's conjecture (cf. [\ref{ch:Oi2}]).
\begin{thm}\label{ch:Katoconj1}
      Let $f$ be a good representative of $\chi$.
      Let $pg(f) = ((n,m),(n+a_1,m+b_1),\cdots, (n+a_k,m+b_k))$.
      The following inequality holds.
\begin{align*}
      r_x \leq (n+a_k) \cdot (m+b_k-1) \mbox{ \quad $x$ is of type (I)}, \\
      r_x \leq (n+a_k) \cdot b_k + (-m) \cdot a_k \mbox{ \quad $x$ is of type (II)}.
\end{align*}
\end{thm}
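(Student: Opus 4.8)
\end{thm}

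\begin{proof}
The plan is to deduce the inequality from the simple formula of Theorem~\ref{ch:simple} together with an elementary estimate for the area of the convex polygon spanned by the essential vertices of $pg(f)$. First I would replace $r_x$ by $r_x'$: by the results of part~II (cf.\ [\ref{ch:Oi2}]) one has $r_x\le r_x'$ for every character of order $p$, so it is enough to prove both inequalities with $r_x'$ in place of $r_x$. By Lemma~\ref{ch:ess}, $r_x'$ depends only on $ess(pg(f))$, and passing from $pg(f)$ to $ess(pg(f))$ changes neither the first vertex $(n,m)$ nor the last vertex $(n+a_k,m+b_k)$, hence changes neither side of the asserted inequalities; so I may assume $pg(f)=ess(pg(f))$. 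Theorem~\ref{ch:simple} then gives $r_x'=Area(pg(f))+(t-2)(a_k-a_0)$, where in the present notation the quantity $a_k-a_0$ of Theorem~\ref{ch:simple} is the integer $a_k$ of the statement.

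Set $P_i=(n+a_i,\,m+b_i)$ for $0\le i\le k$. Since the $P_i$ are the essential vertices, the polygonal path $P_0P_1\cdots P_k$ is convex with strictly increasing positive slopes; it therefore stays on or above the line $y=m$ on $[n,\,n+a_k]$, and $Area(pg(f))$ equals twice the area $\mathcal A$ of the region bounded by the path, the line $y=m$ and the verticals $x=n$ and $x=n+a_k$. I would bound $\mathcal A$ using that a convex path lies below each of its chords. If $n\ge 0$, comparison with the triangle $(n,m),(n+a_k,m),(n+a_k,m+b_k)$ gives $Area(pg(f))\le a_kb_k$. If $n<0$, then $n+a_k\ge 0$, and — since a good representative has a vertex in $\Z_{\ge 0}\times\Z_{\le 0}$ and the essential path is convex — the path meets the vertical $x=0$ at some height $y_0\le 0$; splitting $\mathcal A$ at $x=0$ and bounding the two pieces by the triangle $(n,m),(0,m),(0,y_0)$ and the trapezoid $(0,m),(n+a_k,m),(n+a_k,m+b_k),(0,y_0)$ yields $Area(pg(f))\le(-n)(y_0-m)+(n+a_k)(y_0-m+b_k)$.

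I then finish by arithmetic, using that a good representative has $m\le 0$ in Case~(II) and $m=0$, $n\ge 0$ in Case~(I). In Case~(II), $t=2$, so $r_x'=Area(pg(f))$: when $n\ge 0$, the target $(n+a_k)b_k+(-m)a_k$ is at least $a_kb_k$ because $n\ge 0$ and $-m\ge 0$; when $n<0$, the target minus the bound just obtained equals $-a_ky_0\ge 0$. In Case~(I), $m=0$, $n\ge 0$ and $t=1$, so $r_x'=Area(pg(f))-a_k\le a_kb_k-a_k=a_k(b_k-1)\le(n+a_k)(b_k-1)=(n+a_k)(m+b_k-1)$, using $n(b_k-1)\ge 0$; the case $k=0$, where the polygon is a single point, is trivial.

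The step I expect to be the main obstacle is the comparison $r_x\le r_x'$ for extensions that are not good. If it is not available off the shelf, one must instead argue on the sequence of blow-ups defining $r_x$: that sequence has the recursive shape $r_x=\mu_0+r_{x'}+r_{x''}$ parallel to the recursion for $r_t'$ used in the proof of Theorem~\ref{ch:simple}, and one verifies that the asserted bounds are compatible with it — in Case~(II) this reduces to checking $b_0(b_0-2e)\ge 0$, which holds since $b_0=m\le 0$ and $e\ge 0$, and Case~(I) is analogous. The area estimate itself is routine once convexity of the essential polygon is in hand, so the whole difficulty is concentrated in this comparison (equivalently, in the blow-up bookkeeping).
\end{proof}
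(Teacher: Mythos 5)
Your area estimates (second and third paragraphs) are essentially sound, but the proof stands or falls on the reduction in your first paragraph, and that reduction is a genuine gap: the inequality $r_x\le r_x'$ for an arbitrary character of order $p$ is not available in [\ref{ch:Oi2}] --- what is recorded there (and quoted in Section 6 of the present paper) is only the \emph{equality} $r_x=r_x'$ for good Artin-Schreier extensions, whereas Theorem \ref{ch:Katoconj1} concerns a general $\chi$ for which merely a good \emph{representative} has been chosen. Nor can the comparison simply be taken for granted: the paper's own proof does not bound $r_x$ by $Area(ess(pg(f)))$ alone, but by $Area(ess(pg(f)))$ \emph{plus} the fan of triangles $2S((0,0)-(n+a_{i_t},m+b_{i_t})-(n+a_{i_{t+1}},m+b_{i_{t+1}}))$, and it is the sum of these two quantities --- twice the area of the quadrilateral with vertices $(0,0),(n,m),(n+a_k,m),(n+a_k,m+b_k)$ --- that is shown to equal the asserted bound $(n+a_k)b_k+(-m)a_k$. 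Those extra triangles are exactly the slack reserved for non-good extensions; your route, which would force $r_x\le Area(ess(pg(f)))$ in Case (II), leaves no room for it, so the missing comparison is not a technicality but the whole difficulty.

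Your fallback sketch does not close this gap. The recursion $r_x=\mu_0+r_{x'}+r_{x''}$ with only the two intersection points of the exceptional divisor with the strict transforms is the recursion \emph{defining} $r_t'$, not a property of $r_x$: after a blow-up the transformed representative need not be good (vertices of its polygon can land in $p\Z^2$, so the actual Swan conductor along the exceptional divisor can drop below the combinatorial value $e$), and cleanliness can fail at further points of the exceptional divisor, each contributing its own $\mu_i$ and further blow-ups. Controlling precisely these phenomena is the content of the paper's argument, which runs an induction on the value $A$ of the asserted bound itself, applies the inductive hypothesis to all non-clean points (of type (I) and type (II)) created on the exceptional divisor, and assembles their contributions into the quadrilateral decomposition above, deferring the bookkeeping to Section 6 of [\ref{ch:Oi2}]; your one-line check ``$b_0(b_0-2e)\ge 0$'' does not engage with any of this. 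A smaller issue: in Case (I) you assume $m=0$ and $n\ge 0$, which the definition of a good representative does not give you --- goodness only provides some vertex in $\Z_{\ge 0}\times\Z_{\le 0}$ --- so even the $r_x'$ half of your Case (I) arithmetic is incomplete as written.
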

\begin{proof}
      We define $A$ to be $(n+a_k) (m+b_k-1)$ in Case (I)
      (to be $(n+a_k) \cdot b_k + (-m) \cdot a_k$ in Case (II)).
      We prove the inequality of this theorem by induction on $A$.
      It is easy to see that the inequality of this theorem holds for $A=0$.
      There exist $s$ and $i_t$ ($0 \leq t \leq s$) such that $ess(pg(f)) = (n+a_{i_t},m+b_{i_t})_{0 \leq t \leq s}$, where we put $a_0=b_0=0$.
      We only treat the Case (II). The proof for the case (I) is similar and omit this.
\begin{align*}
       r_x & \leq A_{ess(pg(f))} + \displaystyle \sum_{t=0}^{s-1} 2 \times S((0,0)-(n+a_{i_t},m+b_{i_t})-(n+a_{i_{t+1}},m+b_{i_{t+1}})) \\
           &= 2 \times S((0,0)-(n,m)-(n+a_k,m)-(n+a_k,m+b_k)) \\
            & \leq (n+a_k) \cdot b_k + (-m) \cdot a_k.
\end{align*}
      We use the induction assumption to prove the first inequality (see Section 6 in [\ref{ch:Oi2}]).
\end{proof}

\section{An application}
      Let $\mathfrak{F}_\chi$ be the \'etale sheaf on $U$ corresponding to $\chi$.
      We denote $\mathrm{K}_X^{\log}$ to be $\wedge^2 \Omega_X^1(\log D)$.
      We define $Sw(\chi)$ by
\begin{equation}
      Sw(\chi ) = \displaystyle \sum_{D'} Sw_{D'}(f) \cdot D', 
\end{equation}
      where $D'$ runs through all irreducible components of $D$.
      From Kato's theory in [\ref{ch:ST1}], we have
\begin{equation*}
\chi_c( U, \mathfrak{F}_\chi ) - \chi_c( U, \Q_l) = (Sw(\chi),Sw(\chi)+ \mathrm{K}_X^{\log}) - \displaystyle \sum_{x \in X} r_x.
\end{equation*} 
      Here, $\chi_c$ implies the compact support \'etale cohomological Euler-Poincar\'e characteristic,
      and $( , )$ implies the intersection pairing.
      
      From this formula and Theorem \ref{ch:simple},
      we can calculate the Euler-Poincar\'e characteristic of $\mathfrak{F}_\chi$ when $r_x=r_x'$.
      Remark that $r_x=r_x'$ for good Artin-Schreier extension (see Theorem 4.2 in [\ref{ch:Oi2}]).

\end{document}